\numberwithin{equation}{section}
\numberwithin{figure}{section}
\theoremstyle{plain}
\newtheorem{thm}{\protect\theoremname}
\theoremstyle{plain}
\newtheorem{prop}[thm]{\protect\propositionname}
\providecommand{\propositionname}{Proposition}
\providecommand{\theoremname}{Theorem}
\newcommand{\dd}{\textrm{d}}
\newcommand{\RR}{\mathbb{R}}
\newcommand{\bx}{\mathbf{x}}
\newcommand{\by}{\mathbf{y}}
\newcommand{\bt}{\mathbf{t}}
\newcommand{\Bm}{B_{3}^{\rm{meas}}}
\newcommand{\Bext}{B_{3}^{\rm{ext}}}
\begin{document}

\title{A method to extrapolate the data for the inverse magnetisation problem with a planar sample}
\author{Dmitry Ponomarev$^1$} 
\thanks{$^1$ Centre Inria d'Universit{\'e} C{\^o}te d'Azur, Factas team, 2004 Route des Lucioles, 06902 Biot, France, \href{mailto:dmitry.ponomarev@inria.fr}{dmitry.ponomarev@inria.fr}}

\maketitle

\begin{abstract}
A particular instance of the inverse magnetisation problem is considered. It is assumed that the support of a magnetic sample (a source term in the Poisson equation in $\mathbb{R}^3$) is contained in a bounded planar set parallel to the measurement plane. Moreover, only one component of the magnetic field is assumed to be known (measured) over the same planar region in the measurement plane. We propose a method to extrapolate the measurement data to the whole plane relying on the knowledge of the forward operator and the geometry of the problem. The method is based on the spectral decomposition of an auxiliary matrix-function operator. The results are illustrated numerically.
\end{abstract} 

\section{Introduction}
Inverse magnetisation problems are inverse source problems for Poisson partial differential equation with the source term being a divergence of some vector field.

We are concerned with a particular instance of such problems which corresponds to a concrete experimental set-up that is used in the Paleomagnetism lab of Earth, Atmospheric \& Planetary Sciences Department at MIT (USA) and that can be schematically described as follows. 
Given measurements of vertical component of the magnetic field $B_3$ on a patch $Q\subset \mathbb{R}^2$ of the horizontal plane at height $x_3=h$, one wants to somehow estimate a planar magnetisation distribution $\overrightarrow{M}$ supported inside the region $Q$ of the underlying horizontal plane (i.e., the plane at height $x_3=0$). Such a problem is known to be ill-posed \cite{mag_kern, mag_kern_mom}
for it admits non-unique solutions, and the solutions are not stable with respect to the measured data $B_3$. To deal with the non-uniqueness issue, additional assumptions (such as unidimensionality) on the magnetisation distribution must be made, see \cite{mag_kern, mag_unidir}.
Alternatively, to avoid such a strong restriction, one can resort to reconstruction of a net magnetisation of a sample (also called magnetisation moment) $\overrightarrow{m}:=\iint_{Q}\overrightarrow{M}\dd^{2}\mathbf{x}$, a constant vector that can be recovered uniquely and is of primary physical importance. There have been several approaches proposed to estimate this quantity \cite{mom_BEP, mom_multip, mom_asympt}, however, the quality of estimation heavily depends on the size of the region with available data and the presence of noise. This leads us to the problem of data extrapolation and denoising.
In the present work, we propose one approach of dealing with this issue which is based on the spectral decomposition of some auxiliary operator that encodes the geometry of the problem and the assumption on support of the magnetisation distribution.     

Let us now formulate the problem mathematically. Given an open bounded set $Q\subset\mathbb{R}^2$, we assume that $\overrightarrow{M}$ $\equiv$ $\left(M_1,M_2,M_3\right)^{T}$ $\in$ $\left[W^{1,2}\left(\mathbb{R}^2\right)\right]^{2}\times L^{2}\left(\mathbb{R}^2\right)$, $\textrm{supp }\overrightarrow{M}\subset Q$ and $B_3^{\textrm{meas}}\in L^2\left(Q\right)$. In the case of non-noisy data, we have $\Bm=\left.B_3\right|_{Q\times\left\{h\right\}}$. The relation between the measured component of the magnetic field $B_3$ at height $x_3=h>0$ and the unknown magnetisation distribution $\overrightarrow{M}$ in the plane $x_3=0$ is given by (see, e.g., \cite[Sec 3.2]{mag_kern_mom} with $\mu_0=1$ by choice of units)
\begin{align}
B_{3}\left(\bx,h\right)= & -\frac{1}{2}\iint_{Q}\partial_{x_{1}}p_{h}\left(\bx-\bt\right)M_{1}\left(\bt\right)\dd^{2}\bt-\frac{1}{2}\iint_{Q}\partial_{x_{2}}p_{h}\left(\bx-\bt\right)M_{2}\left(\bt\right)\dd^{2}\bt\label{eq:B3_via_M1M2M3}\\
 & -\frac{1}{2}\iint_{Q}\partial_{h}p_{h}\left(\bx-\bt\right)M_{3}\left(\bt\right)\dd^{2}\bt,\qquad\bx\in\RR^2,\nonumber
\end{align}
where $p_h\left(\bx\right):=\frac{h}{2\pi\left(\left|\bx\right|^2+h^2\right)^{3/2}}$ and we adopted the notation $\mathbf{x}\equiv\left(x_1,x_2\right)^{T}$ for two-dimensional vectors.
\section{Field extrapolation}
We aim to extrapolate the field $\Bm$ from $Q$ to the whole $\RR^2$. First of all, let us show that this problem makes sense, at least when the measurement data are ideal (i.e., without noise).
\begin{prop}\label{prop:ext_uniq}
Given ideal measurement data $\Bm=\left.B_3\right|_{Q\times\left\{h\right\}}$ satisfying \eqref{eq:B3_via_M1M2M3} on $Q$ with some functions $M_1$, $M_2$, $M_3\in L^2\left(\mathbb{R}^2\right)$, there exists a unique extrapolant $\Bext$ satisfying \eqref{eq:B3_via_M1M2M3} on $\RR^2$ with the same $M_1$, $M_2$ and $M_3$.
\end{prop}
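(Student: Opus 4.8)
The plan is to split the claim into an almost immediate existence part and a substantive uniqueness part, the latter resting on the real-analyticity of any field produced by a compactly supported source through \eqref{eq:B3_via_M1M2M3}. For existence I would simply \emph{define} $\Bext$ on all of $\RR^2$ by evaluating the right-hand side of \eqref{eq:B3_via_M1M2M3} at each $\bx\in\RR^2$ using the given triple $M_1,M_2,M_3$; since by hypothesis this triple reproduces $\Bm$ on $Q$, the function so defined restricts to $\Bm$ on $Q$ and is a valid extrapolant. The only thing to check here is that the integrals converge for every $\bx$, which is immediate because $Q$ is bounded, the $M_i\in L^2\left(Q\right)\subset L^1\left(Q\right)$, and the kernel derivatives are bounded for $h>0$.

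The heart of the matter is to show that such a field is a real-analytic function of $\bx$ on the whole plane. The key point is that for $h>0$ the kernel $p_h\left(\bx-\bt\right)=\frac{h}{2\pi\left(\left|\bx-\bt\right|^2+h^2\right)^{3/2}}$ and each of its derivatives $\partial_{x_1}p_h$, $\partial_{x_2}p_h$, $\partial_h p_h$ never reach their singularity as $\bt$ ranges over $Q$, the denominator being bounded below by $h^2>0$. I would make this precise by a holomorphic extension: replacing $\bx$ by a complex variable $\mathbf{z}\in\mathbb{C}^2$ in the tube $\left\{ \left|\mathrm{Im}\,\mathbf{z}\right|<h/2\right\}$ keeps the analytic continuation of $\left|\bx-\bt\right|^2+h^2$ in the open right half-plane (its real part stays positive, bounded below uniformly in $\bt\in Q$), so the integrand is holomorphic in $\mathbf{z}$ there and, since $Q$ is bounded and the $M_i$ lie in $L^2$, the integrals converge locally uniformly. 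Morera's theorem together with Fubini (or, equivalently, differentiation under the integral sign justified by a dominated bound on the kernel derivatives) then yields holomorphy in $\mathbf{z}$, and restricting to real $\bx$ gives the real-analyticity of $\Bext$ on $\RR^2$.

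Uniqueness is then the identity theorem for real-analytic functions. If $\Bext$ and $\widetilde{B}$ both satisfy \eqref{eq:B3_via_M1M2M3} on $\RR^2$ and agree with $\Bm$ on $Q$, then both are real-analytic on $\RR^2$ and coincide on the nonempty open set $Q$; their difference is therefore a real-analytic function vanishing on an open set, and since $\RR^2$ is connected it vanishes identically, whence $\Bext=\widetilde{B}$ everywhere. It is worth stressing that this argument does not even use that the two extrapolants share the same source: it establishes the stronger, physically meaningful fact that the extension is independent of which (generally non-unique) magnetisation reproduces the data on $Q$.

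I expect the main obstacle to be the rigorous passage from merely $L^2$ data to genuine real-analyticity, i.e.\ controlling the kernel derivatives on the complex tube uniformly in $\bt\in Q$ and verifying that the resulting integrals depend holomorphically on $\mathbf{z}$. Once the uniform lower bound on the real part of the continued denominator is in place this is routine, but it is exactly the step where the hypotheses $h>0$ and the boundedness of $Q$ are genuinely needed.
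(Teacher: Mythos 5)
Your proof is correct, and its backbone --- real-analyticity of any field produced through \eqref{eq:B3_via_M1M2M3} combined with the identity theorem for real-analytic functions on the connected set $\RR^2$ --- is the same as in the paper's proof. The surrounding details, however, are genuinely different. The paper first rewrites \eqref{eq:B3_via_M1M2M3} as a single convolution $-\frac{1}{2}\,\partial_h p_h \star f_M$ with the effective scalar source $f_M = \mathcal{R}_1 M_1 + \mathcal{R}_2 M_2 + M_3$ (Riesz transforms), obtains real-analyticity in one line by viewing $B_3\left(\cdot,h\right)$ as the trace of a harmonic function, and, after concluding via the identity theorem that the difference of the two candidate fields vanishes identically, additionally invokes the positivity of $-\frac{1}{2}\,\partial_h p_h\star$ as an operator on $L^2\left(\RR^2\right)$ (Fourier multiplier $\pi\left|\mathbf{k}\right|\exp\left(-2\pi h\left|\mathbf{k}\right|\right)$) to infer $\widetilde{f}_M = f_M$. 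Your route replaces the harmonicity argument by a direct holomorphic continuation of the kernels into the tube $\left\{\left|\mathrm{Im}\,\mathbf{z}\right|<h/2\right\}$, where the uniform lower bound $\mathrm{Re}\left[\left(\mathbf{z}-\bt\right)\cdot\left(\mathbf{z}-\bt\right)+h^2\right]\geq 3h^2/4$ for $\bt\in Q$ makes the Morera--Fubini argument routine; this is more self-contained, requiring neither the Riesz-transform reformulation nor harmonic function theory. You also correctly observe, in effect, that the paper's positivity step is not needed for the claim as stated: once the difference of the two fields is real-analytic and vanishes on the nonempty open set $Q$, it vanishes identically, and that is already the conclusion. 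What that extra step buys the paper is a by-product not claimed in the proposition, namely that the effective source $f_M$ (though not the vector magnetisation) is itself uniquely determined by the data on $Q$. Finally, your closing remark that the argument never uses that the two extrapolants share the same source is precisely the content of the paper's own proof, which compares fields generated by two possibly different sources $f_M\neq\widetilde{f}_M$; for literally ``the same $M_1$, $M_2$, $M_3$'' uniqueness is immediate, since \eqref{eq:B3_via_M1M2M3} is an explicit formula, so the version you prove is the one that actually matters.
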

\begin{proof}
The existence of the extrapolant $\Bext$ is immediate from \eqref{eq:B3_via_M1M2M3} due do the fact that each of functions $\partial_{x_1}p_{h}$, $\partial_{x_2}p_{h}$ and $\partial_{h}p_{h}$ are defined on the whole $\RR^2$. Let us show the uniqueness. To this effect, it is useful to rewrite \eqref{eq:B3_via_M1M2M3} in an equivalent form
\begin{equation}
B_{3}\left(\bx,h\right)=-\frac{1}{2}\iint_{\RR^2}\partial_{h}p_{h}\left(\bx-\bt\right)f_{M}\left(\bt\right)\dd^{2}\bt=-\frac{1}{2}\left(\partial_{h}p_{h}\star f_M\right)\left(\bx\right),\qquad\bx\in\RR^2,\label{eq:B3_via_fM}
\end{equation}
with $f_M:=\mathcal{R}_1 M_1+\mathcal{R}_2 M_2+M_3\in L^2\left(\RR^2\right)$, where $\mathcal{R}_1$, $\mathcal{R}_2$ are Riesz transforms, see \cite[Thm 2.1]{mag_kern}, \cite[Sec 3.1]{mag_kern_mom}.
Suppose the contrary, i.e., there is some $L^2\left(\RR^2\right)\ni\widetilde{f}_M\neq f_M$ such that the corresponding field
\[
\widetilde{B}_{3}\left(\bx,h\right):=-\frac{1}{2}\left(\partial_{h}p_{h}\star \widetilde{f_M}\right)\left(\bx\right),\qquad\bx\in\RR^2,
\]
satisfies $\widetilde{B}_3\left(\cdot,h\right)=B_3\left(\cdot,h\right)=\Bm$ on $Q$ whereas $\widetilde{B_3^{\rm{ext}}}:=\widetilde{B}_3\left(\cdot,h\right)\neq B_3\left(\cdot,h\right)=:\Bext$ on $\RR^2\backslash{\overline{Q}}$. It is then immediate that the function $B_3^{\rm{diff}}:=\widetilde{B}_3\left(\cdot,h\right)-B_3\left(\cdot,h\right)=-\frac{1}{2}\partial_{h}p_{h}\star\left(\widetilde{f_M}-f_M\right)$ vanishes on $Q$. On the other hand, since  $B_3^{\rm{diff}}$ is real-analytic on $\RR^2$ (as the trace of a harmonic function), this function cannot vanish on an open set of $\RR^2$ unless it is identically zero on the whole $\RR^2$. But $-\frac{1}{2}\partial_{h}p_{h}\star$ is a positive operator on $L^2\left(\RR^2\right)$ (as it corresponds to the Fourier multiplier $\pi\left|\bf{k}\right|\exp\left(-2\pi h\left|\bf{k}\right|\right)$, $\bf{k}\in\RR^2$ is the Fourier domain variable).  
Therefore, we conclude that $\widetilde{f_M}-f_M\equiv 0$ which entails that $\widetilde{B}_3\left(\cdot,h\right)=B_3\left(\cdot,h\right)$ on $\RR^2$ thus contradicting the initial assumption $\widetilde{B_3^{\rm{ext}}}\neq\Bext$.
\end{proof}
Despite the positive affirmation of Proposition \ref{prop:ext_uniq}, any set of realistic measurement data is necessarily contaminated by presence of the noise. Hence, in devising an extrapolation method, instability with respect to small perturbations of the data should be also kept in mind. When using spectral approaches, stabilisation can be achieved by means of low-pass filtering as typical noise has a much larger high-frequency content compared to the exact field data. We shall employ this idea here by putting a spectral approach at the heart of the present extrapolation method.

First, we are going to describe the method from the viewpoint of practical implementation, and then outline its justification. 
For convenience, let us set:
\begin{equation*}
K_{12}:=-\frac{1}{2} p_h,\qquad  K_{3}:=-\frac{1}{2} \partial_h p_h,\qquad \left(K\star_Q f\right)\left(\bx\right)\equiv\iint_Q K\left(\bx-\bt\right) f\left(\bt\right)\dd^2\bt,
\end{equation*}
\begin{equation*}
\left\langle f,\, g\right\rangle\equiv\iint_Q f\left(\bt\right)g\left(\bt\right)\dd^2\bt,\qquad \left\Vert f\right\Vert\equiv\left\Vert f\right\Vert_{L^2\left(Q\right)}.
\end{equation*}
\subsection*{Algorithm}
\begin{enumerate}[label=(\roman*)]
\item Fix positive numbers $J$, $N\in\mathbb{N}_{+}$ sufficiently large (for ideal data $\Bm$).
\item Find $J$ 
 largest in modulus eigenvalues 
$\left|\mu_{1}\right|\geq\left|\mu_{2}\right|\geq\ldots\geq\left|\mu_{J}\right|>0$ and corresponding real-valued eigenfunctions $\left(\phi_j\right)_{j=1}^{J}\subset L^2\left(Q\right)$ satisfying $K_{12}\star_Q \phi_j=\mu_j \phi_j$ on $Q$ and $\left\Vert\phi_j\right\Vert=1$ for $j=1,\, \ldots,\, J$.
\item For any $f\in L^2\left(Q\right)$, define the operator $\mathcal{S}_{J}:\,L^2\left(Q\right)\rightarrow L^2\left(Q\right)$ as
\begin{equation}\label{eq:S_def}
\left(\mathcal{S}_{J}f\right)\left(\bx\right):=\iint_{Q}S_{J}\left(\bx,\bt\right)f\left(\bt\right)\dd^{2}\bt,\qquad S_{J}\left(\bx,\bt\right):=-\sum_{j=1}^{J}\frac{\left\langle \partial_{h}p_{h}\left(\cdot-\bx\right),\phi_{j}\right\rangle }{2\mu_{j}}\phi_j\left(\bt\right).
\end{equation}
\item Find $N$ 
largest eigenvalues 
$\lambda_{1}\geq\lambda_{2}\geq\ldots\geq\lambda_{N}$ and corresponding real-valued eigenvectors $\left(\left[\varphi_{12}^n,\,\varphi_{3}^n\right]^{T}\right)_{n=1}^{N}\subset \left[L^2\left(Q\right)\right]^2$ satisfying 
\begin{equation}
\begin{cases}
K_{12}\star_{Q}\varphi_{12}^{n}+K_{3}\star_{Q}\varphi_{3}^{n}=\lambda_{n}\varphi_{12}^{n}\\
K_{12}\star_{Q}\varphi_{12}^{n}+\mathcal{S}_{J}K_{3}\star_{Q}\varphi_{3}^{n}=\lambda_{n}\varphi_{3}^{n}
\end{cases}\text{ on }\,\,Q
\end{equation}
and  $\left\Vert\varphi_{12}^n\right\Vert^2+\left\Vert\varphi_{3}^n\right\Vert^2=1$ for $n=1,\, \ldots,\, N$.
\item Given $\Bm\in L^2\left(Q\right)$, construct the field extrapolant as
\begin{equation}
\Bext\left(\bx\right)=\sum_{n=1}^{N}b_{n}\widetilde{\varphi}_{12}^{n}\left(\bx\right),\qquad\bx\in\RR^2,
\end{equation}
where, for $n=1,\, \ldots,\, N$,
\begin{equation}
b_n:=\left\langle B_{3}^{\text{meas}},\varphi_{12}^{n}\right\rangle +\left\langle \mathcal{S}_{J}B_{3}^{\text{meas}},\varphi_{3}^{n}\right\rangle,
\end{equation}
\begin{equation}
\widetilde{\varphi}_{12}^{n}\left(\bx\right):=\frac{1}{\lambda_{n}}\iint_{Q}\left[K_{12}\left(\bx-\bt\right)\varphi_{12}^{n}\left(\bt\right)+K_{3}\left(\bx-\bt\right)\varphi_{3}^{n}\left(\bt\right)\right]\dd^{2}\bt,\qquad\bx\in\mathbb{R}^{2}.
\end{equation}
\end{enumerate}
\subsection*{Towards justification of the method}
Let us start by rewriting \eqref{eq:B3_via_M1M2M3} in a slightly different form employing integration by parts in the first 2 terms:
\begin{equation}
B_{3}\left(\bx,h\right)=\left(K_{12}\star_Q D_{M_{12}}\right)\left(\bx\right)+\left(K_{3}\star_Q M_{3}\right)\left(\bx\right),\qquad\bx\in\RR^2,
\end{equation}
where $D_{M_{12}}\left(\bt\right):=\partial_{t_{1}}M_{1}\left(\bt\right)+\partial_{t_{2}}M_{2}\left(\bt\right)\in L^2\left(Q\right)$. In particular, on $Q$, we have $\Bm\left(\bx\right)=\left(K_{12}\star_Q D_{M_{12}}\right)\left(\bx\right)+\left(K_{3}\star_Q M_{3}\right)\left(\bx\right)$ and applying to this equation the operator $\mathcal{S}_J$ defined in \eqref{eq:S_def}, we can reformulate both equations as
\begin{equation}\label{eq:vec_reform_init}
\left[\begin{array}{cc}
K_{12}\star_{Q} & K_{3}\star_{Q}\\
\mathcal{S}_J K_{12}\star_{Q}\hspace{1em} & \mathcal{S}_J K_{3}\star_{Q}
\end{array}\right]\left[\begin{array}{c}
D_{M_{12}}\\
M_{3}
\end{array}\right]=\left[\begin{array}{c}
\Bm\\
\mathcal{S}_J\Bm
\end{array}\right]\qquad \text{ on }\,\, Q.
\end{equation}
Since the truncated Poisson operator $p_h\star_Q$ is positive and compact on $L^2\left(Q\right)$, its range is dense in $L^2\left(Q\right)$. We hence have
\[
\iint_{Q}K_{12}\left(\bt-\by\right)S_{J}\left(\bx,\by\right)\dd^{2}\by=K_{3}\left(\bt-\bx\right)+r_{J}\left(\bt,\bx\right),\qquad \bx,\,\,\bt\in Q,
\]
\[
r_J\left(\bt,\bx\right):=\sum_{j=1}^J\left\langle K_3\left(\cdot-\bx\right),\phi_j\right\rangle \phi_j\left(\bt\right)-K_3\left(\bt-\bx\right),\qquad \bx,\,\,\bt\in Q,
\] 
with $\left\Vert r_J\left(\cdot,\bx\right)\right\Vert\rightarrow 0$ as $J\rightarrow \infty$ uniformly for any $\bx\in Q$.
Moreover, from the definition of $S_J$ in \eqref{eq:S_def}, it is easy to verify that
\[
\iint_{Q}S_{J}\left(\bx,\by\right)K_{3}\left(\by-\bt\right)\dd^{2}\by=\iint_{Q}K_{3}\left(\bx-\by\right)S_{J}\left(\bt,\by\right)\dd^{2}\by,\qquad \bx,\,\,\bt\in Q.
\]
Consequently, the matrix-operator $\mathcal{L}_{J}:\,\left[L^2\left(Q\right)\right]^2\rightarrow \left[L^2\left(Q\right)\right]^2$ defined as
\[
\mathcal{L}_{J}:=\left[\begin{array}{cc}
K_{12}\star_{Q} & K_{3}\star_{Q}\\
K_{3}\star_{Q} & \mathcal{S}_{J}K_{3}\star_{Q}
\end{array}\right]
\]
is self-adjoint, and \eqref{eq:vec_reform_init} rewrites as
\begin{equation}\label{eq:vec_reform}
\mathcal{L}_{J}\left[\begin{array}{c}
D_{M_{12}}\\
M_{3}
\end{array}\right]=\left[\begin{array}{c}
\Bm\\
\mathcal{S}_J\Bm
\end{array}\right]-\left[\begin{array}{c}
0\\
\iint_Q r_J\left(\bt,\cdot\right) D_{M_{12}}\left(\bt\right)\dd^2 \bt
\end{array}\right]\qquad \text{ on }\,\, Q,
\end{equation}
where the second term on the right-hand side can be made arbitrary small for sufficiently large $J$.  Since the operator $\mathcal{L}_{J}$ is also compact, the spectral theorem applies and motivates the algorithm above. Further details will be published elsewhere.

\section{Numerical illustration}
To test the method numerically, we consider the field $B_3^\textrm{meas}$ on $Q=\left[-1,1\right]^2$ which is produced by a magnetisation whose planar support consists of 4 rectangles contained in $Q$, see Figure \ref{fig:mag_B3_orig}. Choosing $N=J=80$ in the described above algorithm, we obtain an extrapolant $B_3^\textrm{ext}$. In Figure \ref{fig:ext_results}, $B_3^\textrm{ext}$ is shown on a rectangle $\left[-10,10\right]^2$, together with a pointwise error. The relative $L^2$--error in this region is around $7\%$. 
\begin{figure}
\includegraphics[scale=0.35]{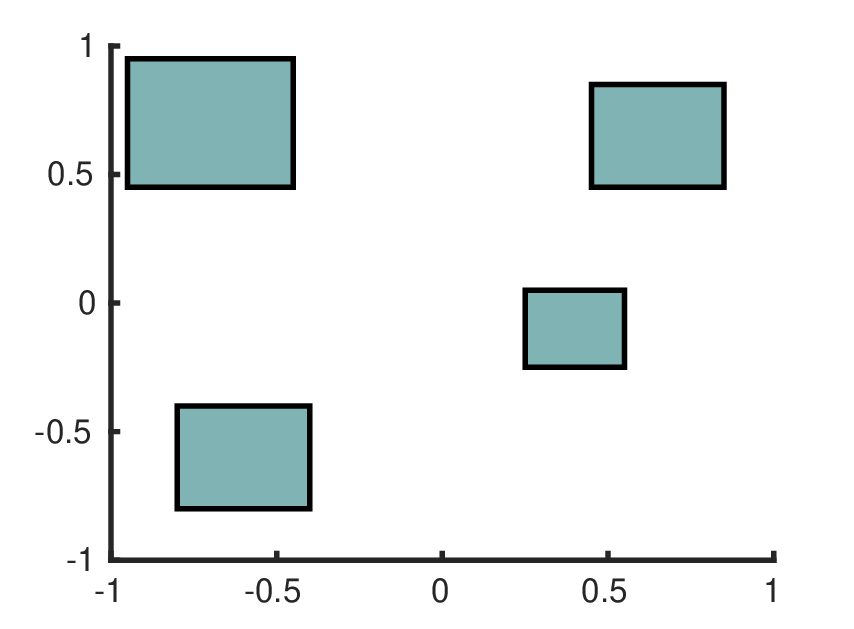}\qquad\includegraphics[scale=0.35]{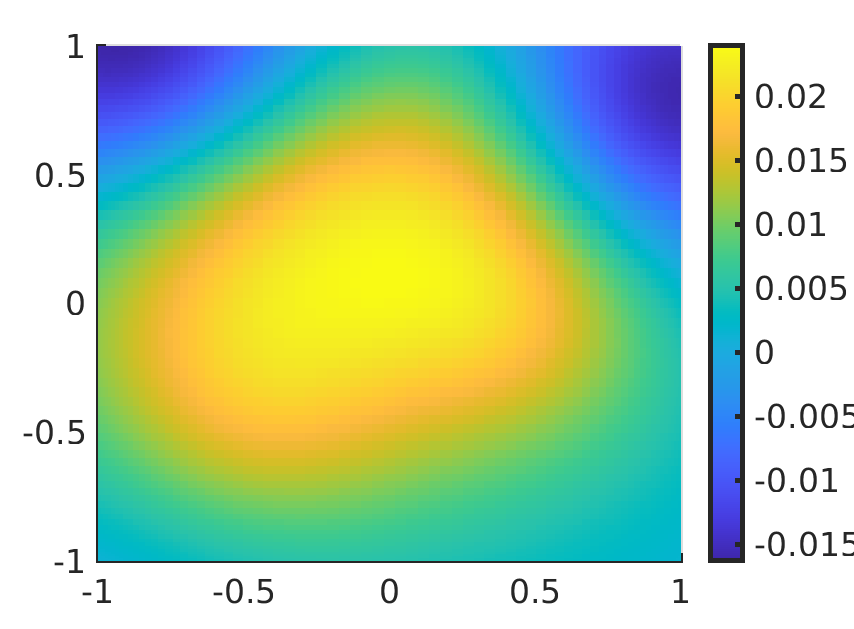}\caption{Planar magnetisation support (left) and measured data $B_3^\textrm{meas}$
(right) on $\left[-1,1\right]^2$}\label{fig:mag_B3_orig}
\end{figure}
\begin{figure}
\includegraphics[scale=0.35]{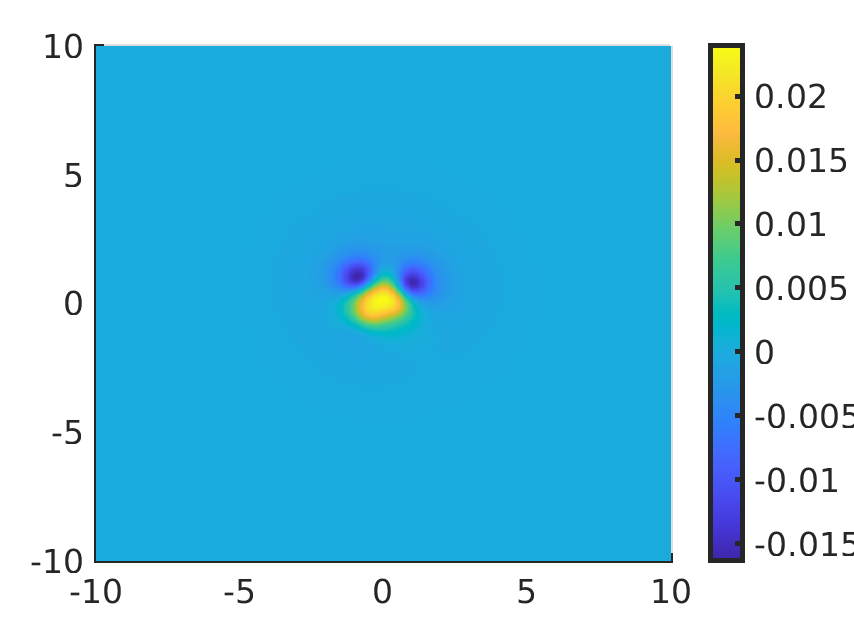}\qquad\includegraphics[scale=0.35]{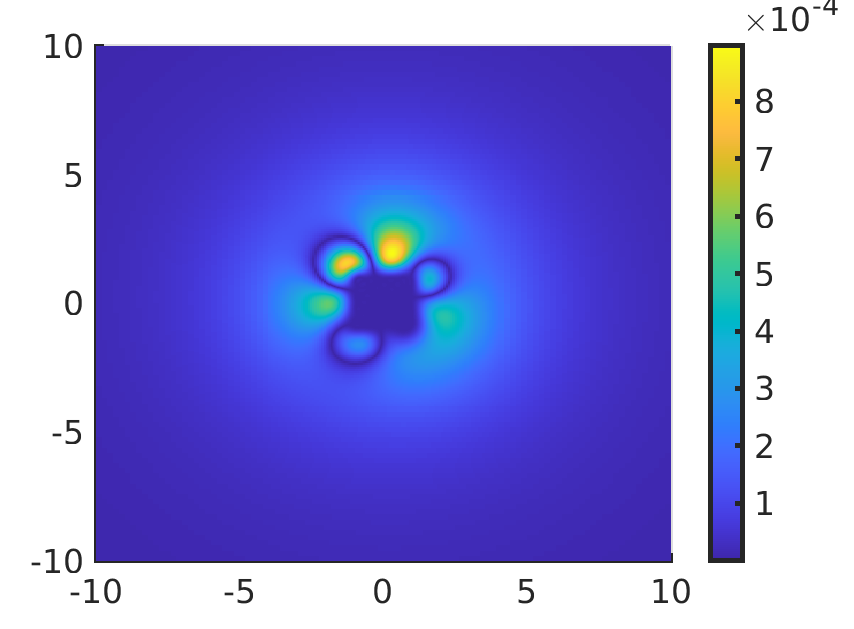}\caption{Extrapolated field $B_3^\textrm{ext}$ (left) and extrapolation error
(right) on $\left[-10,10\right]^2$}\label{fig:ext_results}
\end{figure}
\section{Conclusions}
Motivated by improvement of various aspects of inverse magnetisation problem, we have considered an issue of the field extrapolation. After proving the uniqueness of the extrapolant (for the ideal data), we have proposed an approach which reduces the problem by means of solving an auxiliary equation. The method is constructed based on spectral decompositions of the operators of the reduced and the auxiliary problems and hence can be referred to as a "double-spectral" approach. Further analysis and details on the choice of spectral truncation parameters $J$ and $N$ featuring in the presented algorithm will be discussed in a separate paper.
\section*{Acknowledgement}
The author is grateful for motivating discussions with Juliette Leblond (Centre Inria d'Universit{\'e} C{\^o}te d'Azur, France) and Eduardo Andrade Lima (MIT, USA).


\begin{thebibliography}{99}
\bibitem{mom_BEP}
Baratchart, L., Chevillard, S., Hardin, D., Leblond, J., Lima, E., Marmorat, J. P.: Magnetic moment estimation and bounded extremal problems. Inverse Problems and Imaging \textbf{13} (1),  39--67 (2019).
\bibitem{mag_kern_mom}
Baratchart, L., Chevillard, S., Leblond, J. Silent and equivalent magnetic distributions on thin plates. Harmonic Analysis, Function Theory, Operator Theory, and their
Applications \textbf{22}, 17pp. (2018)
\bibitem{mag_kern}
Baratchart, L., Hardin, D. P., Lima, E. A., Saff, E. B., Weiss, B. P.: Characterizing kernels of operators related to thin-plate magnetizations via generalizations of Hodge decompositions. Inverse Problems \textbf{29} (1), 29pp (2013).
\bibitem{mag_unidir}
Lima, E. A., Weiss, B. P., Baratchart, L., Hardin, D. P., Saff, E. B.: Fast inversion of magnetic field maps of unidirectional planar geological magnetization. J. Geophys. Res.: Solid Earth \textbf{118} (6), 2723--2752 (2013).
\bibitem{mom_multip}
Lima, E. A., Weiss, B. P., Borlina, C. S., Baratchart, L., Hardin, D. P.: Estimating the net magnetic moment of geological samples from planar field maps using multipoles.
Geochemistry, Geophysics, Geosystems \textbf{24}, 38pp (2023).
\bibitem{mom_asympt}
Ponomarev, D.: Magnetisation moment of a bounded 3D sample: asymptotic recovery from planar measurements on a large disk using Fourier analysis. arXiv:2205.14776, 31pp (2022).
\end{thebibliography}
\end{document}